\documentclass[12pt]{article}
\usepackage{amsmath,amsfonts,amssymb,amsthm}

\oddsidemargin 0cm \evensidemargin 0cm
\topmargin0cm
\headheight0cm
\headsep0cm
\textheight23.5cm
\topskip2ex
\textwidth16cm

\newtheorem{proposition}{Proposition}[section]
\newtheorem{theorem}[proposition]{Theorem}
\newtheorem{corollary}[proposition]{Corollary}

\newtheorem{definition}[proposition]{Definition}
\newtheorem{example}[proposition]{Example}

\newcommand{\nc}{\newcommand}
\nc{\I}{{\bf 1}}
\nc{\bG}{{G}}
\nc{\bT}{\mathbf{T}}
\nc{\bA}{\mathbf{A}}
\nc{\bN}{{N}}
\nc{\bM}{{M}}
\nc{\cB}{{\mathcal B}}
\nc{\cE}{{\mathcal E}}
\nc{\cF}{{\mathcal F}}
\nc{\cG}{{\mathcal G}}
\nc{\cS}{{\mathcal S}}
\nc{\cM}{{\mathcal M}}
\nc{\R}{{\mathbb R}}
\nc{\N}{{\mathbb N}}
\nc{\Z}{{\mathbb Z}}

\nc{\BP}{\mathbb{P}}
\nc{\BE}{\mathbb{E}}
\nc{\BQ}{\mathbb{Q}}
\numberwithin{equation}{section}

\begin{document}
\renewcommand{\thefootnote}{\fnsymbol{footnote}}
\author{{\sc G\"unter Last\footnote{Institut f\"ur Stochastik, 
Karlsruhe Institute of Technology, 76128 Karlsruhe, Germany} 
and Hermann Thorisson\footnote{Science Institute, University of Iceland, 
Dunhaga 3, 107 Reykjavik, Iceland}}}
\title{What is typical\,?}
\date{\today}
\maketitle

\begin{abstract} 
\noindent 
Let $\xi$ be a random measure on a locally compact second 
countable topological group and let $X$ be a random element in 
a measurable space on which the group acts. In the compact 
case, we give a natural definition of the concept that the origin 
is a typical location for $X$ in the mass of  $\xi$, and prove that 
when this holds the same is true on sets placed uniformly at random 
around the origin. This new result motivates an extension of the concept of typicality
to the locally compact case where it coincides with the concept of mass-stationarity. 
We describe recent developments in Palm theory where these 
ideas play a central~role.
\end{abstract}

\vspace{0.2cm}
\noindent
{\em Keywords:} random measure; typical location; Poisson process; 
point-stationarity; mass-stationarity; Palm measure;
allocation; invariant transport.

\vspace{0.2cm}
\noindent
{\em MSC} 2000 {\em subject classifications:} 60G57, 60G55; 60G60

\section{Introduction}

The word \,`typical'\, is sometimes used in probability contexts in an
informal way. For instance, a typical element in a finite set, --
or in a finite interval, -- is usually 
interpreted as an element chosen according to the uniform distribution.
Also, after adding a point at the origin to a stationary Poisson process,
the new point is often referred to as a typical point of the process.
Note that in both these examples the choice of an element (point) 
is far from being arbitrary;
typical does not mean arbitrary.
In this paper we attempt to make the term `typical' precise.

We consider a random measure $\xi$ on a locally compact second 
countable topological group and a random element $X$ in 
a measurable space on which the group acts. In the compact 
case, we give a natural definition of the concept that the origin 
is a typical location for $X$ in the mass of  $\xi$, and prove that 
this property is equivalent to the more mysterious property that 
the same is true on sets placed uniformly at random 
around the origin. 
This new result motivates an extension of the concept of typicality 
to the locally compact case where it coincides 
with the concept of mass-stationarity
which was introduced in \cite{LaTho09}.
We then outline recent developments  in Palm theory 
of stationary random measures where these concepts
play a central role.

\section{Preliminaries}\label{preliminaries}

Let $G$ be a locally compact second countable topological group
equipped with the Borel $\sigma$-algebra $\cG$.
Then the mapping from $G \times G$ to $G$ taking $(s, t)$ to  
$st$ and the mapping from $G$ to $G$ taking
$t$ to  $t^{-1}$ are measurable. 
We refer to the
neutral element $e$ of $G$ as the {\em origin}
and to the elements of $G$ as {\em locations}.
 
 For a measure $\mu$ on $(G, \cG)$ and a set
$C \in \cG$ such that $0 < \mu(C) < \infty$, define 
the conditional probability measure
$\mu(\, \cdot \mid C)$ by 
\begin{align*}
\mu(A \mid C) = \mu(A \cap C)/\mu(C), \quad A \in \cG.
\end{align*} 
For convenience, we let
\,$\mu(\, \cdot \mid C)$\, equal some fixed probability
measure if \,$\mu(C)=0$. 
For $t\in G$, let  
$t\mu$ be
the pushforward
of $\mu$ under the mapping $s\mapsto ts$,
that is,
\begin{align*}
t \mu (A) := \mu(t^{-1}A), \quad A \in \cG.
\end{align*}
Let $\lambda\ne 0$ be a left-invariant Haar
measure, see e.g.\ Theorem 2.27 in [6].
An example is any countable group 
$G$ with $\lambda$ the counting measure.
Another example is $\R^d$ under addition
with $\lambda$ the Lebesgue measure.

Let  $\overset{D}{=}$ denote identity in distribution.\! 
Let $\xi$ be a  nontrivial 
random measure on~$(G, \cG)$.
Say that $\xi$ is {\em stationary} if
\begin{align*}
t \xi \overset{D}{=} \xi, \quad t \in G.
\end{align*}
Let $G$ act on a measurable space $(E, \cE)$ measurably, that is,
such that the mapping
from $G \times E$ to $G$ taking $(t, x)$ to  
$t x$ is measurable. 
Let  $X$ be a random element in $(E, \cE)$.
For~instance, $X$ could be a random field $X = (X_s)_{s \in G}$ and 
$t X = (X_{t^{-1} s})_{s \in G}$ for $t \in G$.
Say that $X$ is {\em stationary} if
\begin{align}\label{(1.1)}
t X \overset{D}{=} X,\quad t \in G.
\end{align}
Put $t (X,\xi) = (t X,t \xi)$.
Say that $(X,\xi)$ is {\em stationary} if
\begin{align*}
t (X,\xi) \overset{D}{=} (X,\xi), \quad t \in G.
\end{align*}
Let $(\Omega, \cF, \BP)$ be the probability space on which
the random elements in this paper are defined.
If $S$  is a random element in $(G, \cG)$, let $S^{-1}$ denote the
group inverse of $S$ (and not the inverse of $S$ as a function
defined on $\Omega$).

\section{Compact groups and typicality}\label{finite}

In this section assume that $G$ is compact. 
Then both $\lambda$ and $\xi$ are finite and $\lambda$ is also right invariant 
(see e.g.\ Theorem 2.27  in \cite{Kallenberg}).
An example is any finite group with $\lambda$ the counting measure. 
Another example is the $d$-dimensional rotation group.

Let $S$  be a random element in $(G, \cG)$. 
Say that $S$ is uniformly distributed on $C \in \cG$ if 
$S$ has the distribution $\lambda(\, \cdot \mid C)$. 
Note that $\lambda(\, \cdot \mid G)= \lambda /\lambda(G)$.

\begin{definition}\label{2.1}\rm
$(a)$
If $S$ is uniformly distributed on $G$, 
then $S$ is a {\em typical} location in~$G$.

$(b)$
If $S$ is a typical location in $G$ and independent of $X$, 
then $S$ is a typical location {\em for}  $X$.

$(c)$
If $S$ is a typical location  for $X$ and $S^{-1}X \overset{D}{=} 
X$, then the {\em origin} is a typical location for $X$.
\end{definition}

\begin{theorem}\label{2.2}
Let $G$ be compact.

$(a)$ If $S$ is a typical location for $X$,
then $S^{-1} X$ is stationary.

$(b)$
The origin is a typical location for $X$
if and only  if $X$ is stationary.
\end{theorem}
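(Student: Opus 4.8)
The plan is to prove both parts directly from the definitions, exploiting the fact that on a compact group the normalized Haar measure $\lambda/\lambda(G)$ is both left- and right-invariant, so that a uniformly distributed location is invariant under left translation in distribution.

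For part $(a)$, I would let $S$ be uniformly distributed on $G$ and independent of $X$, and fix $t \in G$; the goal is to show $t(S^{-1}X) \overset{D}{=} S^{-1}X$. The key observation is that $t(S^{-1}X) = (St^{-1})^{-1} X$, so I must compare the random location $St^{-1}$ with $S$. Here the main step is to verify that $St^{-1} \overset{D}{=} S$: since $S$ has distribution $\lambda(\,\cdot \mid G) = \lambda/\lambda(G)$, the law of $St^{-1}$ is the pushforward of normalized Haar measure under right multiplication by $t^{-1}$, which equals normalized Haar measure precisely because $\lambda$ is right-invariant on a compact group. Combining this with the independence of $S$ and $X$, I would conclude that the pair $(St^{-1}, X)$ has the same joint law as $(S, X)$, and hence that the measurable image $(St^{-1})^{-1}X = t(S^{-1}X)$ has the same law as $S^{-1}X$. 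Since $t$ was arbitrary, $S^{-1}X$ is stationary.

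For part $(b)$, the forward direction is essentially immediate: if the origin is a typical location for $X$, then by Definition \ref{2.1}$(c)$ there is a typical location $S$ for $X$ with $S^{-1}X \overset{D}{=} X$, and part $(a)$ gives that $S^{-1}X$ is stationary, so $X$ itself is stationary. For the converse, I would assume $X$ is stationary and construct a witness $S$: take $S$ uniformly distributed on $G$ and independent of $X$, which makes $S$ a typical location for $X$ by Definition \ref{2.1}$(b)$. It then remains to check the condition $S^{-1}X \overset{D}{=} X$. I would compute the law of $S^{-1}X$ by conditioning on $S = t$ and using independence, obtaining a mixture $\int \BP(t^{-1}X \in \cdot)\,\lambda(dt)/\lambda(G)$; stationarity of $X$ means $t^{-1}X \overset{D}{=} X$ for every $t$, so every term of the mixture equals the law of $X$, whence the mixture itself equals the law of $X$. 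This yields $S^{-1}X \overset{D}{=} X$, so the origin is a typical location for $X$.

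I expect the only genuinely delicate point to be the right-invariance step in part $(a)$: one must invoke that on a compact group the left Haar measure is automatically right-invariant, which is exactly the fact quoted from Theorem 2.27 in \cite{Kallenberg} at the start of this section. Everything else is a careful bookkeeping of pushforwards under the group action together with the independence of $S$ and $X$; in particular I would be mindful throughout that $S^{-1}$ denotes the group inverse of the random location and not a functional inverse on $\Omega$, consistent with the convention fixed in Section \ref{preliminaries}.
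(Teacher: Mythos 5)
Your proposal is correct and takes essentially the same route as the paper's own proof: part $(a)$ hinges on the identity $t(S^{-1}X) = (St^{-1})^{-1}X$ together with the fact that $St^{-1}$ is again a typical location for $X$ (right-invariance of Haar measure on a compact group plus independence), and part $(b)$ combines $(a)$ with the mixture argument from stationarity and independence. The paper states these steps more tersely, but the underlying argument is identical.
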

\begin{proof}
$(a)$ 
If $S$ is a typical location for $X$
then so is $St^{-1} $ for each $t \in G$.
Thus $(St^{-1})^{-1}X$ 
has the same distribution as $S^{-1}X$.
But $(St^{-1})^{-1}X = t (S^{-1}X) $.
Thus $S^{-1} X$ is stationary.

$(b)$ Let $S$ be a typical location for $X$. If 
$S^{-1}X \overset{D}{=} X$
then $X$ is stationary since $S^{-1}X$ is stationary.
Conversely, if $X$ is stationary then 
$S^{-1}X \overset{D}{=} X$ follows from
\eqref{(1.1)} and the independence of $S$ and $X$. 
\end{proof}

We shall now extend the above typicality concepts from the uniform 
distribution to random measures. 

\begin{definition}\label{2.3}\rm
$(a)$
If the conditional
distribution of $S$ given $\xi$ is $\xi(\cdot \mid G)$, 
then  $S$ is a {\em typical} location in the {\em mass} of $\xi$.

$(b)$
If $S$ is a typical location in the mass of $\xi$ and 
$S^{-1}\xi\overset{D}{=}\xi$, 
then  
the {\em origin} is a  typical location in the~mass~of~$\xi$.

$(c)$
If $S$ is a typical location in the mass of $\xi$ and
conditionally independent of $X$~given~$\xi$, 
then $S$ is a typical location {\em for}  $X$
in the mass of $\xi$.

$(d)$
If $S$ is a typical location for $X$ in the mass of $\xi$ and 
$S^{-1}(X, \xi)\overset{D}{=}(X,\xi)$, then  
the {\em origin} is a typical location for $X$ in the mass of $\xi$.
\end{definition}

The following theorem says that
 the origin is a typical location for $X$ in the mass of 
$\xi$ if and only if
it is a typical location for $X$ in the mass of $\xi$
on {\em sets placed uniformly at random 
around the origin}.

\begin{theorem}\label{2.4}
Let $G$ be compact. 
 Then the origin is a typical location for $X$ in the mass of $\xi$ if and only if
for all  $C \in \cG$ such that $\lambda(C) > 0$ 
\begin{gather}\label{(2.1)}
\left(V_C^{-1}(X, \xi), U_CV_C\right) \overset{D}{=} \left((X, \xi), U_C\right)
\end{gather}
where 
\begin{align*}
&\text{$(i)$ \,\,$U_C$ is uniformly distributed on $C$ and independent of $(X, \xi)$}, and\\
&\text{$(ii)$ $V_C$ has the conditional distribution $\xi(\,\cdot \mid U_C^{-1}C)$ given $(X, \xi, U_C)$}.
\qquad \quad
\end{align*}
\end{theorem}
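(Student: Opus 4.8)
\section*{Proof proposal for Theorem \ref{2.4}}

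The plan is to recast both sides as integral identities for test functions and then treat the two implications separately, spending most effort on the forward one. Write $Y=(X,\xi)$, so that $s^{-1}Y=(s^{-1}X,s^{-1}\xi)$. By Definition \ref{2.3}$(a),(c),(d)$ the conditional law of $S$ given $Y$ is forced to be $\xi(\,\cdot\mid G)=\xi/\xi(G)$ (it depends only on $\xi$, hence $S$ is conditionally independent of $X$ given $\xi$), so ``the origin is a typical location for $X$ in the mass of $\xi$'' is equivalent to the scalar identity
\[
\BE\Big[\tfrac{1}{\xi(G)}\int_G g(s^{-1}Y)\,\xi(ds)\Big]=\BE[g(Y)],\qquad g\ge 0. \qquad(\star)
\]
Dually, testing \eqref{(2.1)} against bounded measurable $h$ and inserting the conditional law of $V_C$ together with the independence of $U_C$ (the common factor $1/\lambda(C)$ cancels), I would record that \eqref{(2.1)} is equivalent to
\[
\BE\Big[\int_C\frac{1}{\xi(u^{-1}C)}\int_{u^{-1}C}h(v^{-1}Y,uv)\,\xi(dv)\,\lambda(du)\Big]=\BE\Big[\int_C h(Y,u)\,\lambda(du)\Big]. \qquad(\star\star)
\]

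For the easy implication (\,$\Leftarrow$\,) I would simply specialize $(\star\star)$ to $C=G$. Then $u^{-1}C=G$ for every $u$, so $V_G$ is globally mass-typical with law $\xi/\xi(G)$ and the normalization is the constant $\xi(G)$. Since $\lambda$ is right invariant in the compact case and $U_G$ is independent of $(Y,V_G)$, the recorded location $U_GV_G$ is uniform on $G$ and independent of $V_G^{-1}Y$; testing against products $h(y,a)=g(y)k(a)$ and cancelling the location factor leaves precisely $(\star)$, i.e.\ origin typicality. (Equivalently, one simply reads off $S:=V_G$ from the $C=G$ instance of \eqref{(2.1)}.)

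The forward implication (\,$\Rightarrow$\,) is where the work lies. Assuming $(\star)$, I would put the mass integral outermost on the left of $(\star\star)$, substitute $w=uv$ in the inner $\lambda$-integral, and use right invariance of $\lambda$ to replace $\lambda(du)$ by $\lambda(dw)$. The point $v^{-1}$ then acts on the whole pair, so $h(v^{-1}Y,uv)$ becomes $h(v^{-1}Y,w)$ and the normalization $\xi(u^{-1}C)$ becomes the \emph{local} mass $(v^{-1}\xi)(w^{-1}C)$ of the shifted measure. The aim is to display the resulting $v$-integral as $\int_G\Phi(v^{-1}Y)\,\xi(dv)$ for a single functional $\Phi$ and then let $(\star)$ collapse it onto $\BE[\Phi(Y)]$, which should be the right-hand side of $(\star\star)$.

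The main obstacle is that this collapse is not clean. The constraint $u\in C$ survives the substitution as a \emph{second} indicator $\mathbf{1}_C(wv^{-1})$, which couples the integration location $w$ to the shift $v$; hence the integrand is \emph{not} a function of the shifted configuration $v^{-1}Y$ alone, and $(\star)$ cannot be applied term by term. Moreover the local normalization $(v^{-1}\xi)(w^{-1}C)$ must be reconciled with the global $\xi(G)$ that $(\star)$ produces. I expect the decisive step to be an exchange (mass-transport) argument: establish, as a lemma equivalent to $(\star)$, an identity that transports the shift together with a companion location, and then integrate it against $\lambda(dw)$ over $C$ so that the averaging over the \emph{uniformly placed} set $C$ exactly absorbs the discrepancy between local and global normalization. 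Verifying that this averaging reproduces the plain right-hand side of $(\star\star)$ --- equivalently, that the coupling indicator integrates out correctly --- is the crux on which the whole equivalence turns, and is the one step I would not expect to be routine.
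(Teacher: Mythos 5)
Your reduction of origin-typicality to $(\star)$, your reformulation $(\star\star)$ of \eqref{(2.1)}, and your treatment of the backward implication (specializing to $C=G$, which is exactly what the paper does) are all correct. But the forward implication --- the only hard part of Theorem~\ref{2.4} --- is not proved. You perform the substitution $w=uv$, correctly identify the obstacle (the surviving indicator $1_{\{wv^{-1}\in C\}}$ couples the location $w$ to the shift $v$, so the integrand is not a function of $v^{-1}(X,\xi)$ alone, and the local normalization $(v^{-1}\xi)(w^{-1}C)$ does not match the global $\xi(G)$ produced by $(\star)$), and then stop, conjecturing that some transport lemma should close the gap. That is an acknowledged missing step, not an argument.

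The missing idea, which is the heart of the paper's proof, is to apply origin-typicality \emph{twice}, with an auxiliary typical location supplying exactly the ``companion point'' you were hoping for. Since the origin is typical, one may replace $(X,\xi)$ by $S^{-1}(X,\xi)$ inside the left side of $(\star\star)$, where $S$ has conditional law $\xi(\,\cdot\mid G)$ given $(X,\xi)$; after shifting the $\xi(dv)$-integral by $S$ and conditioning, $S$ becomes a third integration variable $s$ weighted by $\xi(ds)/\xi(G)$. Now substitute $r=us^{-1}v$ (rather than $w=uv$) and use right-invariance of $\lambda$: the indicator $1_{\{v\in u^{-1}C\}}$ collapses to $1_{\{r\in C\}}$, the troublesome coupling indicator $1_{\{u\in C\}}$ becomes $1_{\{v^{-1}s\in r^{-1}C\}}$, and the normalization becomes $(v^{-1}\xi)(r^{-1}C)$. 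Shifting the $s$-integral by $v$ turns that indicator into $1_{\{s\in r^{-1}C\}}$ integrated against $(v^{-1}\xi)(ds)$, so that for each $r$ the entire inner $(s,v)$-integrand depends on $v$ only through $v^{-1}(X,\xi)$, integrated against $\xi(dv)/\xi(G)$. A second application of $(\star)$ then replaces $v^{-1}(X,\xi)$ by $(X,\xi)$, and finally the $s$-integral $\int 1_{\{s\in r^{-1}C\}}\,\xi(ds)/\xi(r^{-1}C)=1$ exactly absorbs the local normalization, leaving the right-hand side of $(\star\star)$. In other words, the ``exchange lemma'' you anticipated is not an extra ingredient: it is $(\star)$ itself, used once in reverse to create the companion variable $s$ and once forward to remove the shift $v$; without this double use, the calculation cannot be closed.
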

\begin{proof}
Suppose \eqref{(2.1)} holds for all $C$. Then in particular
$V_G^{-1}(X, \xi) \overset{D}{=} (X, \xi)$. 
Moreover, since $U^{-1}_GG=G$ we have from $(ii)$ that
$V_G$ has the conditional distribution $\xi\left(\cdot \mid G\right)$ given $(X, \xi, U_G)$.
This implies that $V_G$ is a typical location in the mass
of $\xi$ and also that 
$V_G$ is conditionally independent of $X$ given $\xi$.
Thus the origin is a
typical location for $X$ in the mass of $\xi$.

Conversely, suppose  the origin is a typical location for $X$ 
in the mass of $\xi$. 
For nonnegative measurable $f$ and with $U_C$ and $V_C$ as above
we have
\begin{align*}
\BE\big{[}f\big{(}V_C^{-1}(X, \xi), U_CV_C\big{)}\big{]} \!=
\BE\bigg{[}\iint  \!1_{\{u\in C\}}1_{\{v\in u^{-1}C\}}f\left(v^{-1}(X, \xi)\right), uv)
\frac{\xi(dv)}{\xi(u^{-1}C)}\frac{\lambda(du)}{\lambda{(C)}}\bigg{]}.
\end{align*}
Let $S$ be a typical location for $X$ in the mass of $\xi$. Then we obtain 
\begin{align*}
\BE&\big{[}f(V_C^{-1}(X, \xi), U_CV_C)\big{]}\\ 
&=
\BE\bigg{[}\iint  1_{\{u\in C\}}1_{\{v\in u^{-1}C\}}f\big{(}v^{-1}S^{-1}(X, \xi), uv\big{)}
\frac{(S^{-1}\xi)(dv)}{(S^{-1}\xi)(u^{-1}C)}
\frac{\lambda(du)}{\lambda{(C)}}\bigg{]}\\ 
&=
\BE\bigg{[}\iint  1_{\{u\in C\}}1_{\{S^{-1}v\in u^{-1}C\}}
f\big{(}(S^{-1}v)^{-1}S^{-1}(X, \xi), uS^{-1}v\big{)}
\frac{\xi(dv)}{(S^{-1}\xi)(u^{-1}C)}\frac{\lambda(du)}{\lambda{(C)}}\bigg{]}\\ 
&=
\BE\bigg{[}\iiint  1_{\{u\in C\}}1_{\{s^{-1}v\in u^{-1}C\}}
f\big{(}v^{-1}(X, \xi), us^{-1}v\big{)}
\frac{\xi(dv)}{(s^{-1}\xi)(u^{-1}C)}\frac{\lambda(du)}{\lambda{(C)}}
\frac{\xi(ds)}{\xi(G)}\bigg{]}.
\end{align*}
Make the variable  substitution 
$r = us^{-1}v$ (equivalently, $u= rv^{-1}s$) and use right-invariance
of $\lambda$ to obtain 
\begin{align*}
\BE&[f(V_C^{-1}(X, \xi), U_CV_C)]\\ 
&=\BE\bigg{[}\iiint1_{\{v^{-1}s\in r^{-1}C\}}1_{\{r\in C\}}
f\big{(}v^{-1}(X, \xi), r\big{)}\frac{\xi(dv)}{(v^{-1}\xi)(r^{-1}C)}
\frac{\lambda(dr)}{\lambda{(C)}}
\frac{\xi(ds)}{\xi(G)}\bigg{]}\\
&=\BE\bigg{[}\iiint1_{\{s\in r^{-1}C\}}1_{\{r\in C\}}
f\big{(}v^{-1}(X, \xi), r\big{)}\frac{\xi(dv)}{v^{-1}\xi(r^{-1}C)}
\frac{\lambda(dr)}{\lambda{(C)}}
\frac{v^{-1}\xi(ds)}{\xi(G)}\bigg{]}\\
&=\BE\bigg{[}\iint1_{\{s\in r^{-1}C\}}1_{\{r\in C\}}
f\big{(}S^{-1}(X, \xi), r\big{)}\frac{(S^{-1}\xi)(ds)}{(S^{-1}\xi)(r^{-1}C)}
\frac{\lambda(dr)}{\lambda{(C)}}
\bigg{]}.
\end{align*}
Again, apply the fact that $S$ is a typical location for $X$ in the mass of $\xi$  
(and recall we are assuming that  the origin is a typical location for $X$ 
in the mass of $\xi$)
to obtain
\begin{align*}
\BE&[f(V_C^{-1}(X, \xi), U_CV_C)]
=\BE\bigg{[}\iint 1_{\{s\in r^{-1}C\}}1_{\{r\in C\}}
f\big{(}(X, \xi), r\big{)}\frac{\xi(ds)}{\xi(r^{-1}C)}\frac{\lambda(dr)}{\lambda{(C)}}
\bigg{]}\\
&=\BE\bigg{[}\int \bigg{(}\int 1_{\{s\in r^{-1}C\}}\frac{\xi(ds)}{\xi(r^{-1}C)}
\bigg{)}1_{\{r\in C\}}
f\big{(}(X, \xi), r\big{)}\frac{\lambda(dr)}{\lambda{(C)}}
\bigg{]}\\
&=\BE\bigg{[}\int1_{\{r\in C\}}f\big{(}(X, \xi), r\big{)}
\frac{\lambda(dr)}{\lambda{(C)}}\bigg{]}\\&=\BE\big{[}f\big{(}(X, \xi\big{)}, U_C)\big{]}
\end{align*}
that is, \eqref{2.1} holds.
In the above calculation expressions like
$\left((s^{-1}\xi)(u^{-1}C)\right)^{-1}$ can be given some
fixed (arbitrary) value if $(s^{-1}\xi)(u^{-1}C)=0$. This requires 
some care but can be accomplished as in the first
part of the proof of Theorem 6.3 in [8].
\end{proof}

\section{Locally compact groups, typicality and\\ mass-stationarity}\label{infinite}

We shall now drop the condition that $G$ is compact.
Then $\lambda$ and $\xi$ are only $\sigma$-finite
so Definitions~\ref{2.1} and \ref{2.3} do not work. 
 However, Theorem~\ref{2.4}
suggests a way to define 
typicality of the origin in this case: 
demand that the origin  
is a typical location for $X$ in the mass of $\xi$
on sets placed uniformly at random 
around the origin.

\begin{definition}\label{3.1}\rm
$(a)$
If \eqref{(2.1)}  holds for all relatively compact $\lambda$-continuity 
sets $C$ with $\lambda(C) > 0$, 
then  
the {\em origin} is a {\em typical} location {\em for} $X$ in the {\em mass} of~$\xi$.

$(b)$ 
If \eqref{(2.1)}  holds with $X$ deleted, 
then we say that 
the origin is a typical location in the mass of $\xi$. 

$(c)$
If $(a)$ is true with $\xi = \lambda$, 
then we say that 
the origin is a typical location for $X$.
\end{definition}

The reason we choose here to restrict $C$ to be a
$\lambda$-continuity set (that is, a set with boundary having 
$\lambda$-measure zero) is that then the property in
the definition is exactly the property used in \cite{LaTho09} 
to define {\em mass-stationarity}\,: 
$(X, \xi)$ is called mass-stationary if 
the origin is a typical location for $X$
in the mass of $\xi$ in the sense of Definition~\ref{3.1}.

Now recall (see e.g.\ [6] for the case $G=\R^d$
and [7] for the general case)
that a pair $(X, \xi)$ is called a {\em Palm version} 
of a stationary pair
$(Y, \eta)$ if for all nonnegative measurable functions $f$ and all
compact $A \in \cG$ with $\lambda (A) > 0$, 
\begin{align} \label{(3.1)}
\BE[f(X, \xi)] = \BE\Big{[}\int_A f\big{(}t^{-1} (Y, \eta)\big{)} \eta(dt)\Big{]}
\Big{/}\lambda(A).
\end{align}
In this definition  \,$(X,\xi)$\, and \,$(Y, \eta)$\, are allowed to
have distributions that are only $\sigma$-finite 
and not necessarily probability measures.
The distribution of $(X,\xi)$ is finite 
if~and~only if $\eta$ has finite intensity, that is, 
if and only if $\BE[\eta(A)]<\infty$ for compact $A$.
In this case the distribution of $(X,\xi)$ can be normalized to a probability 
measure.

The following equivalence of mass-stationarity
and Palm versions was established~in \cite{LaTho09} in the Abelian case and
extended to the non-Abelian case in \cite{La08b}.

\begin{theorem}\label{3.2}
Let $G$ be locally compact
and allow the distributions of $(X, \xi)$ and $(Y, \eta)$ to be only
$\sigma$-finite.
Then $(X, \xi)$ is mass-stationary
$($that is, the origin is a typical location for $X$ in the mass of $\xi$$)$ 
if and only if $(X,\xi)$ is the  Palm version of a stationary $(Y, \eta)$.
\end{theorem}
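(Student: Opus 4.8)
The plan is to prove the two implications separately, using the Palm formula \eqref{(3.1)} directly and reconstructing the stationary pair by an explicit inversion formula. As a preliminary I would record, by a monotone-class argument, that \eqref{(3.1)} is equivalent to its kernel (refined Campbell) form: for every nonnegative measurable $h$ on the product of $G$ with the state space of $(X,\xi)$,
\[
\BE\Big[\int_G h\big(t,\,t^{-1}(Y,\eta)\big)\,\eta(dt)\Big]=\int_G \BE\big[h\big(t,(X,\xi)\big)\big]\,\lambda(dt),
\]
where taking $h(t,\cdot)=1_A(t)f(\cdot)$ recovers \eqref{(3.1)}. This kernel form is what makes the change-of-variables manipulations below possible, and it is the identity I would verify for the pair constructed in the converse direction.

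For the implication that a Palm version is mass-stationary, I would fix a relatively compact $\lambda$-continuity set $C$ with $\lambda(C)>0$ and expand the left-hand side of \eqref{(2.1)} exactly as in the proof of Theorem~\ref{2.4}, unfolding the kernels defining $U_C$ and $V_C$ into a double integral against $\xi(dv)/\xi(u^{-1}C)$ and $\lambda(du)/\lambda(C)$. Applying the Palm formula \eqref{(3.1)} to this whole functional lifts the $(X,\xi)$-expectation to the stationary pair $(Y,\eta)$; the invariance $t(Y,\eta)\overset{D}{=}(Y,\eta)$ then licenses the same substitution $r=us^{-1}v$ used there, after which the inner integral collapses against the normalising factor and the right-hand side $\BE[f((X,\xi),U_C)]$ emerges. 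The continuity-set hypothesis enters precisely to guarantee that the boundary terms produced by the substitution carry $\lambda$-measure zero and hence drop out.

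For the converse I would construct the candidate stationary law by inversion: fix a relatively compact $\lambda$-continuity set $W$ with $\lambda(W)>0$ and define $(Y,\eta)$ through
\[
\BE\big[g(Y,\eta)\big]:=\BE\Big[\int_G g\big(t(X,\xi)\big)\,\frac{1_W(t)\,\lambda(dt)}{\xi(t^{-1}W)}\Big],
\]
the $\sigma$-finite law obtained by spreading the Palm version over $G$ and normalising by the $\xi$-mass of the window. The three points to verify are that this law is $\sigma$-finite and independent of the choice of $W$, that it is stationary, and that $(X,\xi)$ is its Palm version in the sense of the refined Campbell identity above. Each of these is exactly where mass-stationarity is used: property \eqref{(2.1)}, as $C$ ranges over the windows and their translates, is the invariance that forces the inversion to be consistent, that makes the resulting law shift-invariant, and that makes the inversion and the Palm map mutually inverse. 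Conceptually one is showing that \eqref{(2.1)} for all continuity sets is equivalent to the single intrinsic invariance of the law of $(X,\xi)$ that characterises Palm measures among $\sigma$-finite laws.

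The hard part, and the reason this goes beyond the compact Theorem~\ref{2.4}, is twofold. First, $\lambda$ is now only a left Haar measure, so the clean right-invariance exploited in Section~\ref{finite} is unavailable; the substitution $r=us^{-1}v$ must be performed while tracking the modular factor by which right translation distorts $\lambda$, and one must check that these factors cancel between the $\xi$- and $\lambda$-integrations. Second, since the laws of $(X,\xi)$ and $(Y,\eta)$ are only $\sigma$-finite and the denominators $\xi(u^{-1}C)$ and $\xi(t^{-1}W)$ may vanish, every step must be justified on the sets where these masses are positive, assigning fixed arbitrary values elsewhere---precisely the care flagged after the proof of Theorem~\ref{2.4}, here compounded by the unboundedness of $G$. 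I expect the bulk of the work to lie not in the algebraic identities but in this measure-theoretic bookkeeping and in the approximation reducing the family of all continuity sets to the single intrinsic invariance.
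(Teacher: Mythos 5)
The paper itself offers no proof of Theorem~\ref{3.2}: it quotes the result from \cite{LaTho09} (Abelian case) and \cite{La08b} (general case), noting only that the key ingredient is the intrinsic characterization of Palm measures from \cite{Mecke}. So your proposal has to be judged against that indicated route. Your forward direction (Palm version $\Rightarrow$ mass-stationary) is a reasonable sketch: one does run the computation of Theorem~\ref{2.4} with the refined Campbell identity and stationarity of $(Y,\eta)$ playing the role that typicality of $S$ plays in the compact case, and this is essentially the first part of the proof of Theorem 6.3 in \cite{LaTho09}, which the paper explicitly points to for the analogous bookkeeping.

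The converse direction, however, contains a genuine gap rather than a compressed proof. You define a candidate law for $(Y,\eta)$ by an inversion formula and then assert that mass-stationarity ``forces'' the three needed facts: independence of the window $W$, stationarity, and the Palm property. No mechanism is given for extracting any of these from \eqref{(2.1)}. Observe that stationarity of your constructed law is (after the substitution $t \mapsto rt$ and left invariance of $\lambda$) literally equivalent to window-consistency along the family $\{rW : r \in G\}$, and window-consistency together with the Palm property amounts to the intrinsic (Mecke) functional equation for the law of $(X,\xi)$. Deriving that deterministic functional equation from \eqref{(2.1)} --- a statement about the randomized pair $(U_C,V_C)$, in which the sampled point $V_C$ is tied to the random window $U_C^{-1}C$ --- is the actual mathematical content of this direction; in \cite{LaTho09} it occupies the hard part of the proof and is not measure-theoretic bookkeeping. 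Your closing sentence concedes exactly this: ``the approximation reducing the family of all continuity sets to the single intrinsic invariance'' is not a detail to be deferred, it \emph{is} the theorem. Separately, in the non-unimodular case the modular factors you promise will cancel do not in fact cancel; they survive into the theory (compare the factor $\Delta(s^{-1})$ in \eqref{(8.1)}), which is one reason the non-Abelian case required the separate treatment of \cite{La08b}.
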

An important ingredient in the proof of this theorem is the intrinsic
characterization of Palm measures derived in \cite{Mecke}.

\section{The Poisson process and reversible shifts} \label{poisson} 

We now turn to the other example 
mentioned in the introduction. 
This example concerns a stationary Poisson process $\eta$
to which we add a point at the origin, thereby yielding the 
process $\xi := \eta + \delta_0$.
In this setting, the new point is often referred to as a typical point of $\xi$.

For the Poisson process on the line ($G = \R$), this is motivated 
by the fact that  
the intervals between the points of $\xi$
have i.i.d.\ (exponential) lengths and thus if
the origin is shifted to the $n^{\text{th}}$ point on the right (or on the left)
then the distribution of the process does not change:
\begin{align}\label{(4.1)}
\xi(T_n + \cdot) \overset{D}{=} \xi, \quad n \in \Z,
\end{align}
where $T_0 := \pi_0(\xi) := 0$ and
\begin{align*}
T_n  := \pi_n(\xi) := \begin{cases} 
\text{ $n^{\text{th}}$ point on the right of the origin if  $n > 0$, }\\
\text{ $-n^{\text{th}}$ point on the left of the origin if  $n < 0$.}
\end{cases} 
\end{align*}
Since $\xi$ looks distributionally the same from all its points, it is 
natural to say that 
the point at the origin is a typical point of $\xi$.

It is well known that on the line the typicality property \eqref{(4.1)} 
characterizes 
Palm versions $\xi$ of 
stationary simple point processes $\eta$ 
(but it is only in the Poisson case that the Palm
version is of the form $\eta + \delta_0$).
Thus due to Theorem~\ref{3.2}, \eqref{(4.1)} 
is equivalent to the origin being a typical location in the
mass of $\xi$ in the sense of Definition~\ref{3.1}.
Thus , -- on the line, -- calling the point at the origin a typical point
is not only natural because of \eqref{(4.1)} but also
consistent with Definition~\ref{3.1}. 

The property \eqref{(4.1)} is a more transparent 
definition of typicality 
than  Definition~\ref{3.1}, but it does not extend immediately  
beyond the line: if $d > 1$ and we go out from the origin in any fixed 
direction then we will (a.s.)\ not hit a point of the Poisson process. 
One might conceive of mending this by ordering the points 
according to their distance from the origin, but this does not yield \eqref{(4.1)}
as is clear from the following example.

\begin{example}\label{4.1}\rm
If $\xi = \eta + \delta_0$ 
is the Palm version of a Poisson process $\eta$
and we shift the origin to the point $T$ that is closest
to the origin,
then the Poisson property is lost:  
the shifted process $\xi(T + \cdot)$
is sure to have a point (the point at the old origin $-T$) that is
closer to the point at the origin 
than to any other point of $\xi(T + \cdot)$.
This is not a property of $\xi$ as the following argument shows.

The stationary Poisson process $\eta$ 
need not have a point that is
closer to the origin
than to any  other point of $\eta$ since 
there is a positive probability that  $\eta$ has no point
in the unit ball around the origin 
and that a bounded shell around that ball
is covered by the balls
of diameter $\frac{1}{2}$
with centers at the points in the shell.
\end{example}

Thus for the Poisson process in the plane ($G = \R^2$), --
and in higher dimensions ($G = \R^d$) and beyond, --
there is no obvious motivation 
(save the analogy with the line)
for calling the new point at the origin typical. 
However, adding that point to the stationary Poisson process
yields its Palm version, and by Theorem~\ref{3.2}
the origin is a typical location in the mass of the Palm version.
Thus calling the point at the origin a typical point
is again consistent with Definition~\ref{3.1}. 

Now although the property \eqref{(4.1)}  does not extend immediately  
beyond the line, a generalization
of \eqref{(4.1)} does.
The key property of $\pi_n$ defining $T_n$ in 
 \eqref{(4.1)} is that they are {\em reversible}\,: 
 a measurable map $\pi$ taking each
$\xi$ having a point at the origin
to a point $T = \pi (\xi)$ is reversible if it has a {\em reverse} $\pi'$
 such that 
 $$
 \text{$\pi'(\xi(T + \cdot)) = -T$ \quad and \quad 
  $\pi(\xi(T' + \cdot)) = -T'$\,\,where\,\, $T' = \pi' (\xi)$.}
  $$
 Above, the shift from the point at the origin to
 the $n^\text{th}$ point on the right (or left) is reversed by
 shifting back to the $n^\text{th}$ point on the left (or right).
In Example~\ref{4.1} on the other hand, the shift to the closest point 
is not reversible because there can be more than one point having
a particular point as their closest points. 
The following example of reversible $\pi_n$ yielding a
generalization of \eqref{(4.1)} is from \cite{FLT}.

\begin{example}\label{4.2}\rm
Let $d = 2$ and consider $\xi = \eta + \delta_0$ where $\eta$ is a 
stationary Poisson process in $\R^2$. Link the points of $\xi$ into a tree 
by defining the mother of each point as follows: 
place an interval of length one around the point parallel to the $x$-axis
and send the interval off in the direction of the $y$-axis until it hits a point,
let that point be the mother of the point we started from. 
Define the age-order of sisters by the order of their $x$ coordinates.
This procedure (see \cite{FLT}) links the points
into a one-ended tree such that each point has an ancestor 
with a younger sister.

Now  put 
\begin{align*}
\pi(\xi) = 
\begin{cases}
\text{oldest daughter of $0$, if $0$ has a daughter,}\\
\text{oldest younger sister, if $0$ has a younger sister  but no daughter, }\\
\text{oldest younger sister of youngest ancestor who has a younger sister, else.}
\end{cases}
\end{align*}
This $\pi$ is reversible with reverse $\pi'$ defined by
\begin{align*}
\pi'(\xi) = 
\begin{cases}
\text{mother of $0$, if $0$ has no older sister,}\\
\text{youngest older sister, if $0$ has a daughterless youngest older sister, }\\
\text{last in youngest-daughter offspring-line  of the youngest older sister, else.}
\end{cases}
\end{align*}
 Put $T_0 := \pi_0(\xi) := 0$ and recursively for $n > 0$
 \begin{align*}
 T_n := \pi_n(\xi) &:=  \pi(\xi(T_{n-1} + \cdot))\\
  T_{-n}  := \pi_{-n}(\xi) &:= \pi'(\xi(T_{-(n-1)} + \cdot)).
 \end{align*}
With this enumeration of the points of $\xi$
the typicality property \eqref{(4.1)} holds, see \cite{FLT}. 

For $d > 2$ the same approach works  to establish \eqref{(4.1)}.
 In that case place a 
$d - 1$ dimensional unit ball around each point and send
the ball off in the $d^\text{th}$ dimension until it hits a point. 
When $d = 3$, this again
strings up all the points of $\xi$ 
into the integer line. However when $d > 3$, this yields an infinite forest of
trees, and the tree containing the point at the origin
only strings up a subset of the points, see \cite{FLT}. 
\end{example}

More sophisticated tree constructions can be found in \cite{HP03}
and \cite{Timar04}.
In particular,  the~points can be linked
into a single tree in all dimensions. And this is true not only
for the Poisson process but for Palm versions of
arbitrary stationary aperiodic simple point processes in $\R^d$.

\section{Simple point  processes and point-stationarity}
\label{point-stationarity}

The 
property \eqref{(4.1)} is a well known characterization of
 Palm versions $\xi$ of stationary
simple point processes on the line. When a random element $X$ is involved
and $(X, \xi)$ is the Palm version of a stationary pair then the 
characterization reads as follows 
(recall that $T_n^{-1}=-T_n$
is the group inverse of $T_n$):
\begin{align*}
T_n^{-1}(X, \xi) \overset{D}{=} (X, \xi), \quad n \in \Z.
\end{align*}
This  is  implied by  
the following property,
\begin{align}\label{(5.1)}
T^{-1} (X, \xi) \overset{D}{=} (X, \xi) 
\text{ for all $T = \pi(\xi)$ where $\pi$ is reversible,} 
\end{align}
which is in turn 
implied by the following property,
\begin{align}\label{(5.2)}
T^{-1} (X, \xi) \overset{D}{=} (X, \xi) 
\text{ for all $T = \pi(X,\xi)$ where $\pi$ is reversible;} 
\end{align}
here $\pi$ {\em reversible} means that 
$\pi$ has a {\em reverse} 
$\pi'$ such that  $\pi'(T^{-1}(X, \xi)) = T^{-1}$ and 
$\pi(T'^{-1}(X, \xi)) = T'^{-1}$ where $T' = \pi' (X, \xi)$.
 
The latter two properties are 
 not restricted to the line, as we saw in Example~\ref{4.2}. 
In \cite{He:La:05}  and \cite{He:La:07} the property 
\eqref{(5.1)} 
is used to define {\em point-stationarity},
a precursor  of mass-stationarity. 
There it is proved, for simple point processes on Abelian $G$,
$(i)$ that  point-stationarity  characterizes Palm versions of stationary pairs,
$(ii)$ that \eqref{(5.1)} can be replaced by \eqref{(5.2)}, and $(iii)$ that
in \eqref{(5.1)} it suffices to consider $\pi$ 
such that  $\pi' = \pi$ (such $\pi$ are said to  induce a 
{\em matching}).

Point-stationarity was
introduced earlier in \cite{Thor99} (see also \cite{Thor00}) for simple point
processes on $G = \R^d$, but the
definition there was more cumbersome, involving
{\em stationary independent backgrounds}\,: 
a random element $Z$ (possibly
defined on an extension of the
underlying probability space) is a 
stationary independent background for $(X,\xi)$ if 

\vspace{0.3 cm}
$(i)$\,\, $Z$ takes values in a measurable space on which $G$ acts measurably, and

\vspace{0.1 cm}
$(ii)$ $Z$ is stationary and independent of $(X, \xi)$.
 
\vspace{0.3 cm} 
\noindent 
In \cite{Thor99}\, $\xi$ is a  simple point process on $\R^d$ and
the pair $(X, \xi)$
is called point-stationary if for all stationary independent backgrounds $Z$,
\begin{align}\label{(5.3)}
T^{-1} ((Z,X), \xi) \overset{D}{=}  ((Z,X), \xi) 
\text{ for all $T = \pi((Z,X),\xi)$ where $\pi$ is reversible.} 
\end{align}
This property was proved 
to characterize Palm versions $(X, \xi)$ of stationary
pairs and to be equivalent to what later became
the definition of mass-stationarity. The proof of the fact that \eqref{(5.3)} 
implies \eqref{(2.1)} 
with $C  = [0,1)^{d}$ is sketched in the following 
example. The result for $C  = [0,h)^{d}$ is obtained in the same way,
and the result for
relatively compact $C$ then follows by a simple conditioning argument.

\begin{example}\label{5.1}\rm Consider $G = \R^d$. 
Let  $U_C$ be uniform on $C = [0,1)^{d}$ 
and $U$ be uniform on $[0, 1)$. Let $U_C$ and $U$ be independent 
and independent of $(X,\xi)$. Put $Z = (U_C^{-1}\Z, U)$
and let shifts leave $U$ intact.
Let $\pi_n(Z, \xi)$ be the
$n^{\text{th}}$ point of $\xi$ 
after the point at the origin 
in the circular lexicographic ordering of the points 
in the set $U_C^{-1}C$. 
These $\pi_n$ are reversible (with $\pi'_n$ obtained from the reversal
of the lexicographic ordering), and so is the mapping $\pi$ defined by
$$
\pi((Z,X), \xi) := \pi(Z,\xi) := \pi_{[U\xi(U_C^{-1}C)]}(Z, \xi).
$$ 
Now $V_C :=\pi(Z,\xi)$ has the conditional distribution 
$\xi(\,\cdot\! \mid U_C^{-1}C)$ given $((Z,X), \xi)$,
and thus also given $(X, \xi, U_C)$ since $U_C$ and $Z$ are measurable
functions of each other. Thus 
\eqref{(5.3)} implies \eqref{(2.1)} for this particular set $C$. 
\end{example}

The results mentioned above together with Theorem~\ref{3.1} 
yield the following theorem.

\begin{theorem} \label{5.2}
Let $\xi$ be a simple point process on a locally compact Abelian $G$
having a point at the origin.
Allow the distributions of $(X, \xi)$ and $(Y, \eta)$ to be only
$\sigma$-finite.
Then the following claims are equivalent:

$(a)$ 
the pair $(X, \xi)$ is mass-stationary,

$(b)$ 
the pair $(X, \xi)$ is the Palm version of a stationary $(Y, \eta)$,

$(c)$ 
the pair $(X, \xi)$ is point-stationary,

$(d)$
the property \eqref{(5.1)} holds with $\pi$ restricted to be its own 
reverse (matching),

$(e)$
the property \eqref{(5.2)} holds,

$(f)$
the property \eqref{(5.3)} holds for all stationary independent backgrounds $Z$.
\end{theorem}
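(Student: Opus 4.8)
The plan is to build the sixfold equivalence as a single cycle, drawing the individual links from results already in hand and supplying only the one bridge the preceding discussion leaves implicit. Four of the links are quotations. Theorem~\ref{3.2}, stated for arbitrary locally compact $G$, gives $(a)\Leftrightarrow(b)$. The results of \cite{He:La:05,He:La:07} for simple point processes on Abelian $G$ give $(b)\Leftrightarrow(c)$ (point-stationarity characterizes Palm versions), $(c)\Leftrightarrow(e)$ (property \eqref{(5.1)} may be replaced by \eqref{(5.2)}), and $(c)\Leftrightarrow(d)$ (it suffices to use matchings $\pi'=\pi$). These already fuse $(a),(b),(c),(d),(e)$ into one class, so the whole task reduces to attaching $(f)$.

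For $(f)\Rightarrow(a)$ I would cite Example~\ref{5.1}: with the structured background $Z=(U_C^{-1}\Z,U)$ and the circularly ordered reversible maps $\pi_n$, property \eqref{(5.3)} forces \eqref{(2.1)} for $C=[0,1)^d$, and the same construction with $C=[0,h)^d$ followed by a conditioning argument yields \eqref{(2.1)} for all relatively compact $\lambda$-continuity sets, i.e.\ mass-stationarity. The reverse inclusion $(b)\Rightarrow(f)$ is the genuinely new step. Here the idea is that a Palm version survives the adjunction of a stationary independent background: if $(X,\xi)$ is the Palm version of a stationary $(Y,\eta)$ and $Z$ is any stationary independent background, choose $Z^{*}\overset{D}{=}Z$ stationary and independent of $(Y,\eta)$ and test \eqref{(3.1)} on functions $g((z,x),m)=h(z)f(x,m)$. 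Because $\BE[h(t^{-1}Z^{*})]=\BE[h(Z^{*})]$ is constant in $t$, the factor $h$ pulls out of the integral against $\eta$, and one reads off that $((Z,X),\xi)$ is the Palm version of the stationary triple $((Z^{*},Y),\eta)$. Applying $(b)\Leftrightarrow(e)$ to this enlarged pair shows that $((Z,X),\xi)$ satisfies \eqref{(5.2)} for reversible $\pi=\pi((Z,X),\xi)$, which is exactly \eqref{(5.3)} for this $Z$; as $Z$ was arbitrary, $(f)$ follows.

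I expect the bridge $(b)\Rightarrow(f)$ to be the main obstacle, and two points there need care. First, the background $Z$ may be defined on an extension of $(\Omega,\cF,\BP)$, so I must check that the cited characterization of \eqref{(5.2)} is insensitive to replacing the random element $X$ by $(Z,X)$ on such an extension; since the results of \cite{He:La:05,He:La:07} are stated for arbitrary random elements in spaces carrying a measurable $G$-action, the pair $(Z,X)$ qualifies and the characterization applies unchanged. Second, the equivalences $(b)$--$(e)$ hold for general locally compact Abelian $G$, whereas Example~\ref{5.1} is written for $\R^d$; the $(f)\Rightarrow(a)$ leg is therefore the only place where passing from $\R^d$ to general $G$ is not automatic, and it is there that any genuine extension of Thorisson's background formulation in \cite{Thor99} would have to be justified. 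Assembling the cycle $(a)\Leftrightarrow(b)\Leftrightarrow(c)$, $(c)\Leftrightarrow(d)$, $(c)\Leftrightarrow(e)$, and $(b)\Rightarrow(f)\Rightarrow(a)$ then closes the proof.
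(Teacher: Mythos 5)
Your reduction of the theorem to ``attach $(f)$ to the known equivalences $(a)$--$(e)$'', and your bridge $(b)\Rightarrow(f)$, coincide with the paper's own argument: adjoin the background, verify via \eqref{(3.1)} on product test functions $h(z)f(x,m)$ that $((Z,X),\xi)$ is the Palm version of the stationary $((Z,Y),\eta)$, and apply the equivalence $(b)\Leftrightarrow(e)$ to the enlarged pair. That part is correct and is exactly the paper's new step.

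The gap is in how you close the cycle. You return from $(f)$ to $(a)$ by quoting Example~\ref{5.1}, but that construction (the cube $C=[0,h)^d$, the circular lexicographic ordering, the background $(U_C^{-1}\Z,U)$) lives on $G=\R^d$, whereas the theorem is stated for an arbitrary locally compact Abelian $G$; you flag this yourself as the one place where passing to general $G$ ``would have to be justified'', and you supply no such justification, so as written the cycle is not closed for the theorem as stated. The repair is to not aim for $(a)$ at all: $(f)\Rightarrow(e)$ is immediate, and this is how the paper finishes (``Conversely, $(e)$ follows from $(f)$''). Concretely, any reversible $\pi(X,\xi)$ as in \eqref{(5.2)} can be read as a map $\tilde\pi((Z,X),\xi):=\pi(X,\xi)$ ignoring the background; it is reversible in the sense of \eqref{(5.3)} with reverse $\tilde\pi'((Z,X),\xi):=\pi'(X,\xi)$. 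Applying \eqref{(5.3)} with any single stationary independent background $Z$ (a deterministic one suffices) gives $T^{-1}((Z,X),\xi)\overset{D}{=}((Z,X),\xi)$, and marginalizing out the $Z$-coordinate yields \eqref{(5.2)}. This works on any $G$ and makes Example~\ref{5.1} unnecessary for the proof; with that substitution your argument becomes complete and agrees with the paper's.
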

\begin{proof}
The only claim that has not been proved is 
that $(f)$ can be added to the equivalences $(a)$ through $(e)$
in the general Abelian case. For that purpose assume that $(b)$ holds
and let $Z$ be stationary and independent of $(X, \xi)$ and $(Y, \eta)$.
Then $((Z,X), \xi)$ is the Palm version of $((Z,Y), \eta)$
and the equivalence of $(b)$ and $(e)$ yields~$(f)$.
Conversely, $(e)$ follows from $(f)$.
\end{proof}

\section{Measure preserving  allocations} 
\label{allocations}

For  a measurable map $\pi$ taking a random measure
$\xi$ to a location $\pi (\xi)$ in $G$, define the associated 
{\em $\xi$-allocation} $\tau$ by 
\begin{align*}
\tau(t) = \tau_{\xi}(t) = t\pi (t^{-1}\xi), 
\quad t \in G.
\end{align*}
Similarly, for a measurable map $\pi$ taking $(X,\xi)$ to a location 
$\pi (X, \xi)$ in $G$, define the associated 
 {\em $(X,\xi)$-allocation} $\tau$ by 
\begin{align*}
\tau(t) = \tau_{(X,\xi)}(t) = t\pi (t^{-1}(X,\xi)), 
\quad t \in G.
\end{align*}
The $\pi$ in the definition of reversibility above is defined for
simple point processes $\xi$ having a point at the origin.
If we define $\pi$ for simple 
point processes $\xi$ {\em not} having 
a point at the origin  by $\pi(\xi) = 0$ and  $\pi(X, \xi) = 0$, 
respectively, then 
$\pi$ is reversible if and only if the associated $\tau$ is
a bijection. 
The bijectivity of $\tau$ is further  equivalent to $\tau$ {\em preserving} 
the measure $\xi$, 
that is, for each fixed value of $\xi$ the image measure of $\xi$ under $\tau$ is 
$\xi$ itself:
$$\xi(\{s \in G : \tau(s) \in A\}) = \xi(A), \quad A \in \cG,$$ 
or in probabilistic notation,
\begin{align*}
\xi(\tau \in \cdot) = \xi.
\end{align*}
Preservation and bijectivity are, however, only equivalent
if we restrict to the simple point process case. 
Preservation (rather than reversibility/bijectivity) 
turns out to be the property that is essential for
going beyond simple point processes.

Say that $\pi$ is {\em preserving} if the associated $\tau$ preserves $\xi$.
In \cite{LaTho09} it is shown that the following analogue of 
\eqref{(5.1)},
\begin{align}\label{(6.1)}
T^{-1} (X, \xi) \overset{D}{=}  (X, \xi)
\text{ for all $T = \pi(\xi)$ where $\pi$ is preserving,} 
\end{align}
does {\em not} suffice
to characterize the Palm versions of a stationary random measures
with point masses of different positive sizes
since an allocation cannot split a positive point mass. 
Neither does \eqref{(6.1)} with $T = \pi(X, \xi)$ for the same reason.
One might therefore want to restrict attention to 
{\em diffuse}  random measures, that is,
random measure with no positive point masses.
It is not known yet whether \eqref{(6.1)} 
does suffice
to characterize Palm versions in the diffuse case.
However, this is true when $G = \R^d$ 
if stationary independent backgrounds are allowed.
The following result is from the forthcoming paper \cite{LaTho11}.

\begin{theorem} \label{6.1}
Let $\xi$ be a diffuse random measure on $\R^d$
having the origin in its support.
Then the following claims are equivalent:

$(a)$ 
the pair $(X, \xi)$ is mass-stationary,

$(b)$ 
for all stationary independent backgrounds $Z$,
\begin{align*}
T^{-1} ((Z,X), \xi) \overset{D}{=}  ((Z,X), \xi)
\text{ for all $T = \pi(Z,\xi)$ where $\pi$ is preserving,} 
\end{align*}

$(c)$ 
for all stationary independent backgrounds $Z$,
\begin{align*}
T^{-1} ((Z,X), \xi) \overset{D}{=}  ((Z,X), \xi)
\text{ for all $T = \pi((Z,X),\xi)$  where $\pi$ is preserving.} 
\end{align*}
\end{theorem}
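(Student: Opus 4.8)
The plan is to reduce everything to the Palm-version characterization of mass-stationarity (Theorem~\ref{3.2}) and then close the cycle $(a)\Rightarrow(c)\Rightarrow(b)\Rightarrow(a)$. The implication $(c)\Rightarrow(b)$ is immediate: every $T=\pi(Z,\xi)$ is a special case of $T=\pi((Z,X),\xi)$, namely one that ignores the $X$-coordinate, so the class of preserving shifts appearing in $(b)$ is contained in that of $(c)$. Thus the substantive work is to prove $(a)\Rightarrow(c)$ and $(b)\Rightarrow(a)$.

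For $(a)\Rightarrow(c)$ I would first record the background-absorption fact already used in the proof of Theorem~\ref{5.2}: if $(X,\xi)$ is the Palm version of a stationary $(Y,\eta)$ and $Z$ is a stationary independent background, then $((Z,X),\xi)$ is the Palm version of the stationary pair $((Z,Y),\eta)$. Given a preserving $\pi$, the associated allocation $\tau(t)=t\pi(t^{-1}((Z,X),\xi))$ pushes $\xi$ forward to itself, and the desired invariance $T^{-1}((Z,X),\xi)\overset{D}{=}((Z,X),\xi)$ with $T=\pi((Z,X),\xi)$ is precisely the statement that a Palm version is invariant under shifts along a covariant $\xi$-preserving allocation. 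This is a mass-transport principle, and I expect it to follow from the intrinsic (Mecke) characterization of Palm measures cited after Theorem~\ref{3.2}: applying the exchange/refined Campbell formula for $((Z,Y),\eta)$ to the covariant transport $\tau$ and passing to the Palm measure yields the invariance, with the marks $Z$ and $X$ simply carried along.

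The main obstacle is $(b)\Rightarrow(a)$. Assuming the invariance in $(b)$ for every stationary independent background, I must produce \eqref{(2.1)} for every relatively compact $\lambda$-continuity set $C$, and the strategy is to upgrade Example~\ref{5.1} from the simple point process case to the diffuse case. Fix $C$, let $U_C$ be uniform on $C$ and $U$ uniform on $[0,1)$, independent of each other and of $(X,\xi)$, and take $Z=(U_C^{-1}\Z^d,U)$, so the randomly shifted lattice tessellates $\R^d$ into translates of $C$ with the origin in the cell $K:=U_C^{-1}C$. In the simple case one enumerates the points of each cell cyclically and advances by $[\,U\,\xi(K)\,]$ points; in the diffuse case there are no points to index, so instead I would introduce a covariant measurable \emph{mass clock} on each cell, that is, a measurable parametrization $\phi_K\colon K\to[0,\xi(K))$ pushing $\xi|_K$ forward to Lebesgue measure, obtained from a covariant linear order on $K$ via $\phi_K(s)=\xi(\{s'\in K:s'\prec s\})$ (a genuine measure isomorphism because $\xi$ is diffuse). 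Define $\pi(Z,\xi)$ to be the location in $K$ whose clock value is $(\phi_K(0)+U\,\xi(K))\bmod\xi(K)$.

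Since advancing the clock acts on each cell as a rotation of $[0,\xi(K))$, it is a $\xi$-preserving bijection of that cell; as the cells partition $\R^d$, the associated allocation $\tau$ preserves $\xi$ globally, so $\pi$ lies in the class allowed in $(b)$. Because $U\,\xi(K)$ is uniform on $[0,\xi(K))$, the pushforward under $\phi_K^{-1}$ shows that $V_C:=\pi(Z,\xi)$ has the conditional distribution $\xi(\,\cdot\mid U_C^{-1}C)$ given $((Z,X),\xi)$, hence also given $(X,\xi,U_C)$, exactly as in Example~\ref{5.1}. Feeding this $\pi$ into the invariance of $(b)$ then delivers \eqref{(2.1)} for $C=[0,h)^d$, and the case of general relatively compact $C$ follows by the conditioning argument indicated after \eqref{(5.3)}. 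The hard part is the diffuse construction itself: realizing the mass clock $\phi_K$ as a jointly measurable, covariant family and verifying rigorously that the resulting $\tau$ is measure-preserving and that $V_C$ carries the correct conditional law\,---\,the continuous analogue of cyclically reshuffling points, carried out with no atoms available to enumerate.
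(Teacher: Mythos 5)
Your overall architecture is sound: the cycle $(a)\Rightarrow(c)\Rightarrow(b)\Rightarrow(a)$, with $(c)\Rightarrow(b)$ trivial, $(a)\Rightarrow(c)$ via Theorem~\ref{3.2} plus the invariance of Palm versions under preserving covariant allocations (this is indeed a theorem of \cite{LaTho09}, with backgrounds absorbed into the stationary pair as in the proof of Theorem~\ref{5.2}), and $(b)\Rightarrow(a)$ by upgrading Example~\ref{5.1}. Note that the paper itself offers no proof to compare against: Theorem~\ref{6.1} is quoted from the forthcoming paper \cite{LaTho11}. So the proposal must stand on its own, and it is exactly in the step you yourself flag as ``the hard part'' that it breaks down.

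The gap is your claim that $\phi_K(s)=\xi(\{s'\in K:s'\prec s\})$, for a covariant linear order $\prec$, is ``a genuine measure isomorphism because $\xi$ is diffuse.'' Diffuseness gives you only half of this: the pushforward of $\xi|_K$ under $\phi_K$ is indeed Lebesgue measure on $[0,\xi(K))$ (level sets of $\phi_K$ are $\xi$-null for any Borel linear order), but $\phi_K$ need not be injective, not even $\xi$-a.e.\ injective, once $d\ge 2$. Concretely, take $\xi$ to be Lebesgue (area) measure on $\R^2$ and $\prec$ the lexicographic order: then $\xi\{s'\in K: s'\prec s\}$ equals the area of $\{s'\in K: s_1'<s_1\}$, since each vertical line is $\xi$-null, so $\phi_K$ depends only on the first coordinate of $s$ and collapses every vertical fiber of the cell to a single clock value. ``The location in $K$ whose clock value is $v$'' is then a whole segment, not a point; any measurable selection from these level sets puts $V_C$ on a one-dimensional graph, whose law is singular with respect to $\xi(\,\cdot\mid U_C^{-1}C)$, and the associated $\tau$ maps each fiber to one point and therefore cannot preserve $\xi$. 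So for the most basic diffuse example, planar Lebesgue measure, the constructed $\pi$ is neither preserving nor does $V_C$ have the required conditional law, and the derivation of \eqref{(2.1)} collapses.

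The construction can be repaired, but the repair is the real content: one needs a cell-relative parametrization whose initial segments generate the Borel $\sigma$-algebra, not merely a linear order. For instance, fix a Borel isomorphism $h\colon[0,1)^d\to[0,1)$, define the order on each cell by pulling back the order of $[0,1)$ through $h$ applied in coordinates relative to the cell's corner (this keeps translation covariance), and then $\phi_K=F\circ h$ with $F$ the continuous ``distribution function'' of $h_*(\xi|_K)$ is $\xi$-a.e.\ injective by the one-dimensional argument, so the clock rotation is a genuine a.e.-bijective measure-preserving map of each cell and your uniform-advance and preservation claims go through (modulo the a.e.-definedness bookkeeping you already anticipate). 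Without some such device the proof of $(b)\Rightarrow(a)$ is not merely unpolished; its central object is ill-defined.
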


\section{Cox and Bernoulli randomizations}
\label{cox}

Stationary independent backgrounds constitute a certain kind of
randomization. Another kind of randomization,
a Cox randomization, yields
a full characterization of mass-stationarity in the Abelian case as we now explain.

Consider a {\em Cox process} driven by $(X, \xi)$, that is,
an integer-valued point process which 
conditionally on $(X, \xi)$ is a
Poisson process with intensity  measure~$\xi$.
Intuitively, the Cox process can be thought of as representing the mass of  $\xi$
through a collection of points placed 
independently  at typical locations 
in the mass of $\xi$.
Thus if $(X, \xi)$ is  mass-stationary (if the origin is a typical
location for $X$ in the mass of $\xi$) 
and we add an extra point at the origin to 
the Cox process, then the points of that {\em modified} Cox process $N$
are {\em all} at typical locations in the mass of $\xi$.

It turns out that mass-stationarity reduces to
mass-stationarity with respect to this modified Cox process;
for proof see \cite{LaTho10}.

\begin{theorem} \label{7.1}
Let $\xi$ be a random measure on an Abelian $G$. 
Then  the following claims are equivalent:

$(a)$
the pair $(X, \xi)$ is  mass-stationary,

$(b)$
the pair $(X, N)$ is  mass-stationary,

$(c)$
the pair $((X,\xi), N)$ is  mass-stationary.
\end{theorem}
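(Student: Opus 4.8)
The plan is to reduce all three statements to Palm theory through Theorem~\ref{3.2}, so that ``mass-stationary'' becomes ``is the Palm version of a stationary pair'', and then to connect the three Palm versions by a single computation: the Mecke equation for Cox processes. Write $N=\Pi+\delta_0$, where $\Pi$ is, conditionally on $(X,\xi)$, a Poisson process with intensity $\xi$. The engine of the proof is the following Cox--Palm duality. Let $(Y,\eta)$ be stationary and let $M$ be a Cox process driven by $(Y,\eta)$ (Poisson with intensity $\eta$ given $(Y,\eta)$); since the Cox kernel is equivariant, the triple $((Y,\eta),M)$ is again stationary. Applying the Mecke equation for $M$ conditionally on $(Y,\eta)$ turns an integral $\int_A g(t^{-1}\,\cdot\,)\,M(dt)$ into $\int_A g(t^{-1}(\,\cdot\,,M+\delta_t))\,\eta(dt)$, and since $t^{-1}(M+\delta_t)=t^{-1}M+\delta_0$, the Palm version of $((Y,\eta),M)$ with respect to the mass $M$ is exactly $((X,\xi),N)$, where $(X,\xi)$ is the Palm version of $(Y,\eta)$ with respect to $\eta$ and $N$ is the modified Cox process driven by $\xi$. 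This is the precise sense in which ``adding a point at the origin to the Cox process'' is the point-process counterpart of forming the Palm measure.

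Granting the duality, the equivalence $(a)\Leftrightarrow(c)$ follows. If $(a)$ holds, Theorem~\ref{3.2} gives a stationary $(Y,\eta)$ with Palm version $(X,\xi)$; forming the stationary Cox triple and reading off its Palm version with respect to $M$ produces a mass-stationary pair of the form $((X,\xi),N')$ with $N'$ a modified Cox process driven by $\xi$, hence equal in law to $((X,\xi),N)$ because the two share the same (Cox) conditional law given $(X,\xi)$; thus $(c)$. Conversely, if $(c)$ holds, Theorem~\ref{3.2} yields a stationary partner $((Y^\ast,\eta^\ast),M^\ast)$ whose Palm version with respect to $M^\ast$ is $((X,\xi),N)$; dropping $M^\ast$ leaves a stationary $(Y^\ast,\eta^\ast)$, and its Palm version with respect to $\eta^\ast$ coincides with $(X,\xi)$ once one knows that $M^\ast$ is a Cox process driven by $\eta^\ast$, giving $(a)$. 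The remaining statement $(b)$ is tied to $(c)$ by operations on the mark that leave the mass $N$ untouched: the factor map $((X,\xi),N)\mapsto(X,N)$ is equivariant and the Palm operation commutes with equivariant marginalization, so $(c)\Rightarrow(b)$; for $(b)\Rightarrow(c)$ one re-attaches $\xi$ using its (equivariant) conditional law given $(X,N)$, an equivariant randomization of the mark that preserves mass-stationarity with respect to the unchanged mass $N$.

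The crux, and the step I expect to be the main obstacle, is the reverse half of the Cox--Palm duality used in $(c)\Rightarrow(a)$: showing that when the Palm map of Theorem~\ref{3.2} is inverted on the mass-stationary triple $((X,\xi),N)$, the point-process component $M^\ast$ of the resulting stationary partner is genuinely a Cox process driven by the intensity component $\eta^\ast$. Equivalently, the ``modified Poisson given $\xi$'' structure of $N$ (an added point at the origin together with a size-biasing) must correspond under Palm inversion exactly to the plain ``Poisson given $\eta^\ast$'' structure, with the extra $\delta_0$ accounting for precisely the point that Palm theory places at the origin. Here the Mecke characterization of Poisson processes---the same ingredient cited for Theorem~\ref{3.2} via \cite{Mecke}---does the work, run in reverse and conditionally on the intensity, together with the essential uniqueness of the Palm correspondence. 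Some care is also needed because the distributions are only $\sigma$-finite, so the Cox and Mecke computations must be carried out at the level of (unnormalized) Palm measures rather than probability measures; with that bookkeeping in place the three characterizations collapse onto one another.
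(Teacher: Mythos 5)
First, a point of comparison: the paper does not actually prove Theorem~\ref{7.1}; it states the result and defers the proof to \cite{LaTho10}. So your proposal has to be judged on its own merits, and on those merits it is half right. The Cox--Palm duality you formulate is correct: if $(Y,\eta)$ is stationary and $M$ is a Cox process driven by $(Y,\eta)$, then $((Y,\eta),M)$ is stationary (the Cox kernel without the added point is equivariant), and the conditional Mecke equation together with $t^{-1}(M+\delta_t)=t^{-1}M+\delta_0$ identifies the Palm version of $((Y,\eta),M)$ with respect to $M$ as $((X,\xi),N)$, where $(X,\xi)$ is the Palm version of $(Y,\eta)$ and $N$ is the modified Cox process driven by $\xi$. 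Combined with Theorem~\ref{3.2} this gives $(a)\Rightarrow(c)$, and your $(c)\Rightarrow(b)$ is also sound, since in \eqref{(2.1)} the conditional law of $V_C$ depends only on $(N,U_C)$, so $\xi$ can simply be integrated out of the distributional identity.

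The loop is never closed, though: both of your remaining arrows have genuine gaps, and they sit exactly at the hard content of the theorem. For $(c)\Rightarrow(a)$, the unproven lemma is that the stationary partner $((Y^\ast,\eta^\ast),M^\ast)$ supplied by Theorem~\ref{3.2} has $M^\ast$ conditionally Poisson with intensity $\eta^\ast$. Your proposed justification --- Mecke's Poisson characterization ``run in reverse'' plus essential uniqueness of the Palm correspondence --- is circular: uniqueness can only identify $((Y^\ast,\eta^\ast),M^\ast)$ with a Cox-structured stationary pair if you first exhibit a Cox-structured stationary pair whose Palm version is $((X,\xi),N)$, and by your own forward duality this requires the law of $(X,\xi)$ to already be a Palm measure, i.e.\ statement $(a)$, the thing to be proved. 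Similarly, in $(b)\Rightarrow(c)$ the parenthetical ``(equivariant)'' hides the entire difficulty: the conditional law of $\xi$ given $(X,N)$ is defined only up to null sets of the non-invariant law of $(X,N)$, and the property you need --- covariance of that kernel under shifts to the atoms of $N$ --- is essentially a restatement of $(c)$; neither $(b)$ nor the Cox structure hands it to you. A non-circular route for both directions is to stay at the level of Mecke's intrinsic characterization of Palm measures (the ingredient behind Theorem~\ref{3.2}) instead of inverting the Palm map: the conditional Mecke equation for $\Pi=N-\delta_0$ converts integrals against $N$ into integrals against $\xi$ plus an origin term, which with test functions depending only on $((X,\xi),s)$ turns the characterizing identity for $((X,\xi),N)$ into the one for $(X,\xi)$ (yielding $(c)\Rightarrow(a)$), while passing from the identity for $(X,N)$ up to the one for $((X,\xi),N)$ requires in addition a Laplace-functional (measure-determining class) argument; all of this must be done with the $\sigma$-finiteness care you allude to, and it is this work that the proof in \cite{LaTho10}, to which the paper defers, actually carries out.
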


In the  diffuse case, 
the modified Cox process $N$ is a simple point process
and mass-stationarity reduces to point-stationarity
 by Theorem~\ref{5.2}:

\begin{corollary} \label{7.2}
 Let $\xi$ be a diffuse random measure on an Abelian $G$. 
Then the following claims are equivalent:

$(a)$ 
the pair $(X, \xi)$ is mass-stationary,

$(b)$ 
the pair $(X, N)$ is  point-stationary,

$(c)$
the pair $((X,\xi), N)$ is  point-stationary.
\end{corollary}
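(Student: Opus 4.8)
The plan is to derive the corollary by combining the two preceding results, Theorem~\ref{7.1} and Theorem~\ref{5.2}, after first checking that in the diffuse case the modified Cox process $N$ is a simple point process having a point at the origin. This last observation is exactly what brings the pairs $(X,N)$ and $((X,\xi),N)$ into the scope of Theorem~\ref{5.2}, whose equivalence of mass-stationarity and point-stationarity is stated only for simple point processes with a point at the origin. Both cited theorems are available here precisely because $G$ is assumed Abelian.

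First I would record the preliminary fact. Conditionally on $(X,\xi)$, the process $N$ with its added atom removed is a Poisson process with diffuse intensity measure $\xi$; such a process is almost surely simple and assigns no mass to the fixed singleton $\{e\}$. Hence adjoining $\delta_0$ (the atom at the origin $e$) produces a point process $N$ that is almost surely simple and has a point at the origin. Consequently $N$ satisfies the hypotheses of Theorem~\ref{5.2}, both when $X$ is taken as the mark (for the pair $(X,N)$) and when the enlarged mark $(X,\xi)$ is used (for the pair $((X,\xi),N)$).

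The equivalences then follow by chaining. By Theorem~\ref{7.1}, claim $(a)$ --- that $(X,\xi)$ is mass-stationary --- is equivalent to the mass-stationarity of $(X,N)$ and, equally, to the mass-stationarity of $((X,\xi),N)$. Applying Theorem~\ref{5.2} to the simple point process $N$ with mark $X$, mass-stationarity of $(X,N)$ is equivalent to its point-stationarity, that is, claim $(b)$; applying the same theorem with mark $(X,\xi)$, mass-stationarity of $((X,\xi),N)$ is equivalent to claim $(c)$. Reading these equivalences together yields $(a)\Leftrightarrow(b)\Leftrightarrow(c)$.

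The only genuine work lies in the preliminary step: the diffuseness of $\xi$ is used essentially to guarantee that the driven Poisson process has no multiple points and no atom at the origin, so that $N$ really is a simple point process in the sense required by Theorem~\ref{5.2}. Once simplicity is secured, the remainder is a purely formal concatenation of the two cited theorems, and no further computation is needed.
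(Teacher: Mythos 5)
Your proposal is correct and follows exactly the route the paper intends: the paper's own (one-line) justification is that in the diffuse case the modified Cox process $N$ is a simple point process, so mass-stationarity of $(X,N)$ and $((X,\xi),N)$ reduces to point-stationarity by Theorem~\ref{5.2}, while Theorem~\ref{7.1} supplies the equivalence with mass-stationarity of $(X,\xi)$. Your preliminary verification that a Poisson process with diffuse intensity is a.s.\ simple with no atom at the origin (so that adjoining $\delta_0$ puts $N$ in the scope of Theorem~\ref{5.2}) is the right supporting detail, which the paper leaves implicit.
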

Due to this result the various reversible shifts  that are known for 
simple point processes can now be applied  
to diffuse random measures through the modified Cox process $N$.
 
Yet another kind of randomization, a Bernoulli randomization,
works in the discrete case. A {\em Bernoulli transport}
refers to a randomized allocation rule $\tau$ that allows staying 
at a location $s$ with a probability
$p(s)$ depending on $s^{-1}(X,\xi)$
and otherwise chooses another location according to a
(non-randomized) allocation rule. 
Call the associated $\pi$ {\em Bernoulli}.
This makes it possible to split discrete point-masses. 
The following result is from \cite{LaTho10}.
 
\begin{theorem} \label{7.3}
Let $\xi$ be a discrete random measure on an Abelian $G$. 
Then  $(X, \xi)$ is  mass-stationary
if and only if
\begin{align*}
T^{-1} (X, \xi) \overset{D}{=}  (X, \xi)
\end{align*}
for all $T = \pi(\xi)$ where $\pi$ is preserving and Bernoulli.
\end{theorem}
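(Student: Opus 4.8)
The plan is to route both implications through Theorem~\ref{3.2}, which identifies mass-stationarity of $(X,\xi)$ with being the Palm version of a stationary pair, and through the intrinsic (Mecke) exchange characterization of Palm measures from \cite{Mecke} that underlies it. The whole point of allowing \emph{Bernoulli} rather than merely non-randomized preserving maps is that a Bernoulli coin can \emph{split} a positive atom of $\xi$, and splitting is precisely what is needed to realize a mass-balanced transport between atoms of unequal size; a non-randomized allocation moves whole atoms and cannot do this, which is why \eqref{(6.1)} alone fails in the discrete case.

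Forward direction. Suppose $(X,\xi)$ is mass-stationary. By Theorem~\ref{3.2} it is the Palm version of a stationary $(Y,\eta)$, so the exchange identity of \cite{Mecke} holds. For a non-randomized preserving $\xi$-allocation the invariance $T^{-1}(X,\xi)\overset{D}{=}(X,\xi)$ is the easy half of the Palm characterization, exactly as in \cite{LaTho09}. To handle a Bernoulli $\pi$ I would absorb its coins into a stationary independent background, attaching i.i.d.\ marks to the atoms of $\xi$ and reading the coin off from the mark at the current location. Since adjoining a stationary independent background keeps the Palm property (as in the proof of Theorem~\ref{5.2}), the Bernoulli map becomes an ordinary preserving allocation of the marked measure, whose invariance follows as before; projecting the marks away gives the assertion for $(X,\xi)$.

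Reverse direction. Suppose $T^{-1}(X,\xi)\overset{D}{=}(X,\xi)$ for every preserving Bernoulli $\pi$. Here I would verify the intrinsic exchange identity directly and then invoke \cite{Mecke} together with Theorem~\ref{3.2} to conclude mass-stationarity, thereby bypassing the windows $C$ of \eqref{(2.1)} entirely. For each ordered pair of ``cells'' in the support of $\xi$ one builds a preserving Bernoulli allocation that transports a prescribed fraction of mass from one cell to the other in a balanced way; the Bernoulli coin supplies exactly the fractional transport that equalizes unequal atom masses, while the preserving property encodes the mass balance demanded by the exchange identity. Combining the resulting invariances over a rich enough family of such allocations reconstitutes the full Mecke identity for the Palm kernel of $\xi$.

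The main obstacle is this reverse construction: one must produce a family of \emph{genuinely preserving} Bernoulli allocations, each with a correct reverse and balance, that is rich enough to generate the entire exchange identity, and verify the preservation of $\xi$ despite the splitting of atoms. Getting the mass-balance bookkeeping right for split masses---so that the accumulated invariances yield the sharp Mecke identity and not merely an averaged consequence of it---is the delicate point; by contrast the forward direction and the appeals to Theorems~\ref{3.2} and~\ref{5.2} are comparatively routine.
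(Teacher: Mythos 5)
First, a point of comparison: the paper itself gives no proof of Theorem~\ref{7.3} --- it is quoted from \cite{LaTho10} --- so there is no in-paper argument to measure yours against; your proposal must stand on its own, and it does not yet do so. Your forward direction contains a technical flaw. You propose to absorb the Bernoulli coins into a stationary independent background by attaching i.i.d.\ marks to the atoms of $\xi$. But a configuration of marks indexed by the atoms of $\xi$ is not a stationary independent background in the sense of Section~\ref{point-stationarity}: such a $Z$ must be independent of $(X,\xi)$, whereas the marked atom configuration is built on $\xi$ itself; and for uncountable $G$ (e.g.\ $\R^d$) there is no measurable stationary $Z$ from which one can read off genuinely i.i.d.\ coins $h(s^{-1}Z)$, $s\in G$, at arbitrary countable sets of locations. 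The clean route is available inside this very paper: a preserving Bernoulli transport defines the kernel $K_{(X,\xi)}(s,\cdot)=p(s)\delta_s+\big(1-p(s)\big)\delta_{\tau(s)}$, which is invariant, preserving, Markovian and hence bounded, so Theorem~\ref{8.1} applies; in the Abelian Markovian case \eqref{(8.1)} is exactly the asserted invariance $T^{-1}(X,\xi)\overset{D}{=}(X,\xi)$ with $T$ distributed according to $K_{(X,\xi)}(0,\cdot)$. Thus mass-stationarity implies Bernoulli invariance with no background construction at all.

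The reverse direction is where the content of the theorem lies, and there your proposal has a genuine gap: the entire argument rests on the sentence ``for each ordered pair of cells one builds a preserving Bernoulli allocation that transports a prescribed fraction of mass from one cell to the other,'' but no such construction is given, no verification that these randomized allocations are in fact preserving (the mass-balance bookkeeping after splitting atoms of unequal sizes is exactly the difficulty), and no argument that the resulting family of invariances implies the intrinsic Mecke identity, equivalently \eqref{(2.1)} for all relevant sets $C$, so that Theorem~\ref{3.2} can be invoked. Since non-randomized allocations provably cannot characterize Palm versions of discrete measures (this is the failure of \eqref{(6.1)} noted in Section~\ref{allocations}), producing this family of Bernoulli transports is precisely the heart of the result in \cite{LaTho10}; asserting its existence restates the theorem rather than proving it. As it stands, the forward direction should be repaired by routing it through Theorem~\ref{8.1} (or the transport-kernel invariance of \cite{LaTho09}), and the reverse direction needs an actual construction, not a plan for one.
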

 
\section{Mass-stationarity through bounded invariant\\ kernels}
\label{transport}

We  conclude with a more analytical characterization 
of mass-stationarity. A kernel $K_{(X, \xi)}$ from $G$ to $G$ is
{\em preserving} if 
$$
\int K_{(X, \xi)}(s, A) \xi(ds) = \xi(A), \quad A \in \cG,
$$
and {\em invariant} if 
\begin{align*}
K_{(X, \xi)}(t, A) = K_{t^{-1}(X, \xi)}(0, t^{-1}A), 
\quad t \in G,\,
A \in \cG.
\end{align*}
Note that if $\tau$ is a preserving allocation then the kernel 
defined by
$$K_{(X, \xi)}(t, A) = 1_A(\tau(t))$$ is preserving and invariant.
It is also 
Markovian and 
therefore bounded.

In the Abelian case the following result is from \cite{LaTho09}.
For the general case, which~can~be handled as in Section 3.8 of [7],
we need the modular function $\Delta:G\rightarrow(0,\infty)$
of $G$ ($\Delta\equiv 1$ in the Abelian case).

\begin{theorem} \label{8.1}
The pair  $(X, \xi)$ is  mass-stationary
if and only if 
for all preserving invariant {\em bounded} kernels $K$ 
and all nonnegative measurable functions $f$,
\begin{align}\label{(8.1)}
\BE\left[\int f\big{(}s^{-1}(X,\xi)\big{)}\,\Delta(s^{-1})K_{(X, \xi)}(0,ds)\right]=\BE[f(X,\xi)].
\end{align}
\end{theorem}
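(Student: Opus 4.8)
The plan is to invoke Theorem~\ref{3.2} to replace ``$(X,\xi)$ is mass-stationary'' by ``$(X,\xi)$ is the Palm version of a stationary pair $(Y,\eta)$'', and then to prove the two implications separately. The workhorse throughout is the Campbell--Mecke form of the Palm relation \eqref{(3.1)}: for nonnegative measurable $h$,
\begin{align*}
\BE\Big[\int h\big(s,s^{-1}(Y,\eta)\big)\,\eta(ds)\Big]=\int\BE\big[h\big(s,(X,\xi)\big)\big]\,\lambda(ds),
\end{align*}
which is \eqref{(3.1)} for the indicator-product $h(s,\omega)=\lambda(A)^{-1}1_A(s)g(\omega)$ together with a routine monotone-class extension. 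I would also record the two defining properties of $K$ in the forms $K_{(Y,\eta)}(s,\cdot)=K_{s^{-1}(Y,\eta)}(0,s^{-1}\cdot\,)$ (this is the invariance property with $t$ renamed $s$) and $\int K_{(Y,\eta)}(s,\cdot)\,\eta(ds)=\eta$ (preservation).

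For the necessity direction I would start from $\BE[f(X,\xi)]$ and run this identity \emph{backwards}. First use preservation to write, for any compact $A$ with $\lambda(A)>0$,
\begin{align*}
\lambda(A)\,\BE[f(X,\xi)]&=\BE\Big[\int 1_A(t)f\big(t^{-1}(Y,\eta)\big)\,\eta(dt)\Big]\\
&=\BE\Big[\iint 1_A(t)f\big(t^{-1}(Y,\eta)\big)\,K_{(Y,\eta)}(s,dt)\,\eta(ds)\Big].
\end{align*}
Substituting $t=sw$ in the inner integral and using invariance shows that, for fixed $s$, it equals $\int 1_A(sw)f\big(w^{-1}(s^{-1}(Y,\eta))\big)\,K_{s^{-1}(Y,\eta)}(0,dw)$, i.e.\ a function of the pair $\big(s,s^{-1}(Y,\eta)\big)$. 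Hence the Campbell--Mecke identity transfers the outer $\eta(ds)$-integral to a $\lambda(ds)$-integral against $(X,\xi)$, and after interchanging the (nonnegative) integrals there remains the factor $\int 1_A(sw)\,\lambda(ds)=\lambda(Aw^{-1})=\Delta(w^{-1})\lambda(A)$. Cancelling $\lambda(A)$ yields \eqref{(8.1)}. This is exactly where the modular function enters: it is the Radon--Nikodym factor produced when the forward transport direction $0\mapsto w$ of $K$ is reconciled with the backward shift $w^{-1}$ under the \emph{left} Haar measure $\lambda$, and it is identically $1$ precisely in the unimodular (in particular Abelian) case. Boundedness of $K$ is used only to keep all the $\sigma$-finite integrals well defined.

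For the sufficiency direction I would argue that \eqref{(8.1)}, imposed for \emph{all} bounded invariant preserving kernels, is equivalent to the intrinsic characterization of Palm measures of \cite{Mecke} and hence forces $(X,\xi)$ to be a Palm version; one essentially reverses the chain above to recognise \eqref{(8.1)} as Mecke's equation and then reconstructs the stationary partner $(Y,\eta)$ by Palm inversion. The class of admissible kernels is rich enough for this because every preserving allocation $\tau$ gives the Markovian kernel $K_{(X,\xi)}(t,\cdot)=\delta_{\tau(t)}$, and these together with their bounded randomizations separate the relevant $\sigma$-finite measures. The main obstacle is precisely this converse: one must verify that testing against \emph{bounded} invariant preserving kernels alone (rather than an unbounded class) already pins down the Palm property, and one must carry the modular bookkeeping through the non-unimodular case. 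This is the delicate step, and it can be handled as in Section~3.8 of [7]; the necessity direction, by contrast, is the essentially routine computation sketched above.
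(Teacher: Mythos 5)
The paper itself offers no proof of Theorem~\ref{8.1}: the result is quoted from \cite{LaTho09} in the Abelian case, and for general $G$ the text merely notes it ``can be handled as in Section 3.8 of [7]''. So your proposal can only be measured against those references. Your necessity direction is correct and is indeed the standard route: pass to a stationary pair $(Y,\eta)$ via Theorem~\ref{3.2}, insert the preservation identity into the Palm relation \eqref{(3.1)}, use invariance to rewrite the inner integral as a function of $\big(s,s^{-1}(Y,\eta)\big)$, apply the refined Campbell formula, and collect the factor $\lambda(Aw^{-1})=\Delta(w^{-1})\lambda(A)$; this is exactly where the modular function belongs. Two caveats. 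First, you apply preservation and invariance of $K$ along $(Y,\eta)$, while the hypotheses of the theorem are stated for $(X,\xi)$; this transfer needs the (standard but non-vacuous) observation that, given invariance, preservation is a shift-invariant property, and that almost-sure invariant properties pass between a stationary measure and its Palm measure. Second, your remark that boundedness ``is used only to keep the integrals well defined'' is off: necessity needs no boundedness at all (Tonelli handles nonnegative integrands); boundedness matters only because it weakens the hypothesis of the converse as far as is currently known to be possible.

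The converse is where the real content of the theorem lies, and there your proposal has a genuine gap. ``Reversing the chain'' is not available: under hypothesis \eqref{(8.1)} there is no stationary pair $(Y,\eta)$ to reverse into; one must instead verify Mecke's intrinsic characterization \cite{Mecke} of Palm measures directly, which requires constructing, for each test function appearing in Mecke's equation, a bounded invariant preserving kernel whose instance of \eqref{(8.1)} produces that equation. Your justification for why the kernel class is rich enough --- that preserving allocations give the Markovian kernels $K_{(X,\xi)}(t,\cdot)=\delta_{\tau(t)}$, and that these together with randomizations ``separate'' --- points in the wrong direction. The paper itself records, in the discussion of \eqref{(6.1)} in Section 6, that allocation-based shifts do \emph{not} suffice to characterize Palm versions when $\xi$ has point masses, and the remark immediately after Theorem~\ref{8.1} states that it is still open whether ``bounded'' can be replaced by ``Markovian''. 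So the kernels that make the sufficiency argument work are bounded but genuinely non-Markovian, and are built from the test functions, not from allocations; producing them is the delicate step of \cite{LaTho09}. Citing [7] for this is legitimate as a reference, but it is the heart of the theorem, and the heuristic you offer in its place would not survive being made precise.
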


If $G$ is Abelian and $K_{(X, \xi)}$ is  Markovian then \eqref{(8.1)}
means that 
\begin{align*}
T^{-1} (X, \xi) \overset{D}{=} (X, \xi)
\end{align*}
 where~$T$ has 
conditional distribution $K_{(X, \xi)}(0,\cdot)$ given  $(X,\xi)$.
It is not known yet whether  `bounded' in the theorem
can be replaced by `Markovian'.

Theorem~\ref{8.1} and Theorem~\ref{3.2} yield  
the following extension of Theorem~\ref{2.2}$(b)$ 
to the locally compact case.

\begin{theorem}\label{8.2}
The pair $(X, \lambda)$ is mass-stationary 
$($that is, the origin is a typical location for $X$$)$ 
if and only if $X$ is stationary.
\end{theorem}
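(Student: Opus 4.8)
The plan is to treat the two implications separately, handling the ``if'' part with Theorem~\ref{3.2} and the ``only if'' part with Theorem~\ref{8.1}. The computations are driven throughout by one simplification coming from left-invariance of $\lambda$: since $s^{-1}\lambda=\lambda$ for every $s\in G$, we have $s^{-1}(X,\lambda)=(s^{-1}X,\lambda)$, and consequently $(X,\lambda)$ is stationary precisely when $X$ is stationary (because $t(X,\lambda)=(tX,\lambda)$).

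Suppose first that $X$ is stationary. Then $(X,\lambda)$ is stationary by the remark above, and I would show it is its own Palm version, so that mass-stationarity follows from the ``if'' direction of Theorem~\ref{3.2}. Concretely, I take $(Y,\eta)=(X,\lambda)$ in the defining relation \eqref{(3.1)}: expanding the right-hand side, I pull the expectation inside the $\lambda$-integral by Tonelli (the integrand is nonnegative and $\lambda$ is finite on the compact set $A$) and use $\BE[f(t^{-1}X,\lambda)]=\BE[f(X,\lambda)]$, which holds for each fixed $t$ by stationarity. The $t$-integral then produces a factor $\lambda(A)$ that cancels the normalization, leaving exactly $\BE[f(X,\lambda)]$. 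Hence $(X,\lambda)$ is the Palm version of the stationary pair $(X,\lambda)$, and Theorem~\ref{3.2} yields that $(X,\lambda)$ is mass-stationary.

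For the converse, suppose $(X,\lambda)$ is mass-stationary and fix $t_0\in G$. I would apply \eqref{(8.1)} of Theorem~\ref{8.1} to the deterministic ``shift'' kernel
\[
K_{(X,\lambda)}(s,\cdot):=\Delta(t_0)\,\delta_{st_0}(\cdot).
\]
This kernel is invariant by inspection and bounded, since its total mass is the constant $\Delta(t_0)$. The one computation is preservation: $\int K_{(X,\lambda)}(s,A)\,\lambda(ds)=\Delta(t_0)\,\lambda(At_0^{-1})$, and by the defining scaling property of the modular function the factor $\Delta(t_0)$ is exactly the one needed to turn $\lambda(At_0^{-1})$ back into $\lambda(A)$, so $K$ is preserving. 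Plugging this kernel into \eqref{(8.1)} collapses the $K_{(X,\lambda)}(0,ds)$-integral to the single point $s=t_0$, where the weight $\Delta(s^{-1})=\Delta(t_0^{-1})$ cancels the $\Delta(t_0)$ carried by the kernel; the identity \eqref{(8.1)} then reads $\BE[f(t_0^{-1}X,\lambda)]=\BE[f(X,\lambda)]$. As $t_0\in G$ and the nonnegative measurable $f$ are arbitrary, this gives $t_0^{-1}X\overset{D}{=}X$ for all $t_0$, i.e.\ $X$ is stationary.

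The only place requiring genuine care is the modular bookkeeping in the non-Abelian case: one must verify that the constant built into the shift kernel is simultaneously the one making it preserving and the one cancelling the $\Delta(s^{-1})$-weight in \eqref{(8.1)}, so that no spurious modular factor survives. This is the main (and essentially the only) obstacle; in the Abelian case $\Delta\equiv1$, the kernel is just the deterministic shift $\delta_{st_0}$, both checks are immediate, and the argument reduces to reading off $t_0^{-1}X\overset{D}{=}X$ directly.
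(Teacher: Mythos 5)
Your proposal is correct and follows essentially the same route as the paper: the ``if'' direction via the observation that a stationary $(X,\lambda)$ is its own Palm version combined with Theorem~\ref{3.2}, and the ``only if'' direction by feeding the kernel $K_{(X,\lambda)}(0,\cdot)=\Delta(t_0)\delta_{t_0}(\cdot)$ into \eqref{(8.1)}, which is exactly the paper's choice. Your version merely spells out the details the paper leaves implicit (the Tonelli verification of \eqref{(3.1)} and the modular-function bookkeeping showing the kernel is preserving and that $\Delta(t_0^{-1})\Delta(t_0)=1$ cancels the weight), and it fixes the paper's small typo by correctly concluding $t_0^{-1}X\overset{D}{=}X$.
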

\begin{proof}
Suppose $X$ is stationary. 
Then so is $(X, \lambda)$.  A stationary $(X, \lambda)$ 
is the Palm version of itself. Thus Theorem~\ref{3.2} yields the fact that 
$(X, \lambda)$ is mass-stationary.
Conversely, assume that $(X, \lambda)$ is mass-stationary.
Fix an arbitrary $t \in G$ and let $K_{(X, \lambda)}$ be the invariant kernel with
 $K_{(X, \lambda)}(0, A) = \Delta(t)1_A(t)$. This
 kernel is preserving and from   \eqref{8.1} we obtain that
$\BE\left[f\big{(}t^{-1}(X,\lambda)\big{)}\,\right]=\BE[f(X,\lambda)]$.
Since this holds for all nonnegative measurable $f$ it holds in particular for 
$f$ that are constant in the second argument and thus 
$X \overset{D}{=} Y$. Hence $X$ is stationary. 
\end{proof}

Theorem~\ref{8.2} shows that mass-stationarity is a generalization 
of the concept of stationarity.

\end{document}